\begin{document}
\renewcommand{\theenumi}{\rm (\roman{enumi})}
\renewcommand{\labelenumi}{\rm \theenumi}
\newtheorem{Th}{Theorem}[section]
\newtheorem{df}[Th]{Definition}
\newtheorem{lm}[Th]{Lemma}
\newtheorem{pr}[Th]{Proposition}
\newtheorem{co}[Th]{Corollary}
\newtheorem{ex}[Th]{Example}
\allowdisplaybreaks

\title{Recurrence of the Brownian motion in multidimensional semi-selfsimilar environments and Gaussian environments}
\author {Seiichiro Kusuoka$^1$\footnote{e-mail: kusuoka@math.tohoku.ac.jp}, Hiroshi Takahashi$^2$ and Yozo Tamura$^3$
\vspace*{0.1in}\\
$ ^{1}$Graduate School of Science, Tohoku University \\
6-3 Aramaki Aza-Aoba, Aoba-ku, Sendai 980-8578 Japan. \vspace*{0.1in} \\
$ ^{2}$College of Science and Technology, Nihon University\\
7-24-1 Narashinodai, Funabashi 274-8501 Japan. \vspace*{0.1in} \\
$ ^3$Faculty of Science and Technology, Keio University\\
3-14-1 Hiyoshi, Kohoku-ku, Yokohama 223-8522, Japan.}

\maketitle

\begin{abstract}
Asymptotic behavior of the one-dimensional Brownian motion in general random environments has been investigated by many researchers. However, many of the methods used in the argument are available only for the one-dimensional case. In this paper the multi-dimensional case of the problem is considered, and we obtain some sufficient conditions for recurrence of the multi-dimensional Brownian motion in random environments. By using the sufficient conditions we show that the recurrence of the Brownian motion in Gaussian environments under some conditions on the correlation functions.
\end{abstract}

{\bf 2010 AMS Classification Numbers:} 60K37, 60J60, 60G60.

 \vskip0.2cm

{\bf Key words:} diffusion process, random environment, recurrence, Gaussian field, fractional Brownian field.

\section{Introduction}

It is well-known that the $d$-dimensional Brownian motion, 
is recurrent if $d=1$ or $2$, and transient if $d\geq 3$.
More generally, we have many criterions for the recurrence and transience of diffusion processes, and whether diffusion processes are recurrent or transient depends on their generators (see \cite{Ichihara}, Section 3 and 4 of Chapter IV in \cite{Ikeda-Watanabe}, Section 1.6 in \cite{FOT}, etc.).
In this paper, we consider sufficient conditions for the recurrence of the Brownian motion in random environments.

The Brownian motion in random environments is formulated as follows.
Let $W$ be a random Borel measurable function on ${\bf R}^d$ and consider the following generator
\begin{equation}\label{generator0}
\frac 12 \left( \triangle - \nabla W \cdot \nabla \right) .
\end{equation}
Symbolically, the diffusion process $X_W$ associated with this generator is the solution to the stochastic differential equation: 
\begin{equation}\label{SDE0}
dX_W(t) = dB(t) - \frac 12 (\nabla W)(X_W(t)) dt
\end{equation}
where $(B(t))$ is the $d$-dimensional Brownian motion.
We call $W$ an environment and $X_W$ the Brownian motion in $W$.
When $W$ is not differentiable, (\ref{SDE0}) is no more than symbolical.
However, since
\[
\frac 12 \left( \triangle - \nabla W \cdot \nabla \right) = \frac 12 e^W \sum _{k=1}^d \frac{\partial}{\partial x_k} \left( e^{-W} \frac{\partial}{\partial x_k}\right),
\]
we can construct the diffusion process $X_W$ associated with (\ref{generator0}) by a random time-change of the diffusion process associated with the Dirichlet form:
\begin{equation}\label{DF0}
{\mathscr E}(f,g) = \frac 12 \int _{{\bf R}^d} \left( \nabla f \cdot \nabla g \right) e^{-W}dx.
\end{equation}
Hence, the existence of the diffusion process $X_W$ associated with (\ref{generator0}) under suitable conditions on $W$ is guaranteed (see \cite{FOT}).
Our interest is in the recurrence of the diffusion process $X_W$ given by (\ref{generator0}).

The topic of stochastic processes in random environments was originally given by Solomon \cite{Solomon}.
He introduced the one-dimensional random walk in a random environment and obtained some asymptotic behavior.
In particular, a sufficient condition for the recurrence was obtained.
Later, Sinai \cite{Sinai} obtained another property of asymptotic behavior (so-called localization) of the one-dimensional random walk in a random environment.
Brox \cite{Brox} formulated the one-dimensional Brownian motion in a random environment, which is an analogue to a continuous version of Sinai's model, and obtained the same asymptotic behavior as Sinai's result.
After that, Kawazu, Tamura and Tanaka \cite{ktt} extended Brox's result to general environments.

We are also interested in asymptotic behaviors of the multi-dimensional Brownian motion in random environments.
The approaches in \cite{Brox} and \cite{ktt} are special for one-dimensional diffusion processes and not available for the multi-dimensional ones (see e.g. \cite{ktt}).
In the multi-dimensional case there are a few results.
Tanaka \cite{Tanaka} considered the multi-dimensional Brownian motion in the environment which is L\'evy's multi-dimensional Brownian motion, and showed the recurrence of the process.
The case of the multi-dimensional Brownian motion in the environment generated by independent reflected Brownian motions is concerned in \cite{Taka}.
Some asymptotic behavior different from the recurrence of the multi-dimensional Brownian motion in some environments are obtained in \cite{Kim} and \cite{mathieu95}.

In this paper, we consider more general environments and extend the results of multi-dimensional cases.
First, we consider some sufficient conditions for the recurrence of the Brownian motion in a general environment $W$.
We use the generalized version of Ichihara's criterion.
Ichihara's criterion enables us to obtain the recurrence by checking conditions on coefficients of generators (see \cite{Ichihara}).
Since we are interested in the case that $W$ is not smooth, we apply the generalized version of  Ichihara's criterion given by the argument of Dirichlet forms (Theorem 1.6.7 in \cite{FOT}).
By using it, we obtain some criterions for the recurrence of the Brownian motion in general environments.

Second, we consider the Brownian motion in Gaussian environments and some sufficient conditions for the recurrence of the Brownian motion.
Since Gaussian fields are well-investigated, it is possible to give clear sufficient conditions for the recurrence.
We show that two conditions on correlation functions imply the recurrence.
One of them is for the property that uniformly positive environments on the disk-type region appear with positive probability.
To obtain the positivity of the probability, we extend Tanaka's argument in \cite{Tanaka} to abstract Wiener spaces.
For the extension, we study the relation between the behavior of Gaussian fields and Cameron-Martin spaces.
The other one is for the ergodicity of the environment.
This part goes with the same argument in \cite{Tanaka}.
This criterion is applicable to the Brownian motion on fractional Brownian fields.
We give it as an example of an application of our result.

This paper is constructed as follows.
In Section \ref{General} some abstract sufficient conditions for the recurrence of the Brownian motion in a general environment $W$, and in Section \ref{Gauss} the case of Gaussian environments are considered.
In Section \ref{sec:fBf} we consider the case that the environments are fractional Brownian fields. Applying the criterion obtained in Section \ref{Gauss}, we show the recurrence of the Brownian motion on fractional Brownian fields.

\section{Recurrence of the Brownian motion in general environments}\label{General}

First we consider the Brownian motion in a non-random environment.
Later, we will randomize environments.
Let $W$ be a locally bounded and measurable function on ${\bf{R}}^d$, and let $W(0)=0$.
Consider the diffusion process $X_W$ with initial value $X(0)$ given by the generator:
\begin{equation}\label{generator}
\frac 12 \left( \Delta -\nabla W \cdot \nabla \right) = \frac 12 e^W \sum _{k=1}^d \frac{\partial}{\partial x_k} \left( e^{-W} \frac{\partial}{\partial x_k}\right) .
\end{equation}
As mentioned in Introduction, (\ref{generator}) is a symbolic representation when $W$ is not differentiable.
To define $X_W$, we consider another diffusion process $Y_W$ with initial value $X(0)$ generated by the Dirichlet form
\[
{\mathscr E}(f,g) = \frac 12 \int _{{\bf R}^d} \left( \nabla f \cdot \nabla g \right) e^{-W}dx, \quad f,g \in {\mathcal D}
\]
where
\[
{\mathcal D} := \left\{ f\in L^2(e^{-W}dx); \int  _{{\bf R}^d} |\nabla f|^2 e^{-W} dx < \infty \right\} .
\]
For the existence and the uniqueness of $Y_W$, see Chapter 7 in \cite{FOT}.
The generator of $Y_W$ is symbolically given by 
\begin{equation}\label{generator2}
\frac 12 \sum_{k=1}^{d} \frac{\partial}{\partial x_k} \left( e^{-W} \frac{\partial}{\partial x_k}\right). 
\end{equation}
We set 
\[
\tau_t:= \int_0^t \exp\left\{ -W(Y_W(s)) \right\} ds \quad \mbox{and} \quad A_t := \tau_t^{-1}.
\]
We construct $X_W$ by the time changed process
\[
X_W(t) := Y_W(A_t).
\]
Then, the generator of $X_W$ is symbolically given by (\ref{generator}).
We remark that; when $W$ is Lipschitz continuous, the representation of the generators above will be not only symbolic, but also  rigorous.
Thus, we define the diffusion process $X_W$ associated to (\ref{generator}).
Since $\tau_t$ is strictly increasing and $\lim _{t\rightarrow \infty} \tau _t = \infty$ almost surely, the recurrent and transient property of $X_W$ coincides with that of $Y_W$.
Hence, we study the behavior of the process $Y_W$ generated by (\ref{generator2}) instead of $X_W$ generated by (\ref{generator}).
We remark that the recurrent and transient property of $Y_W$ is independent of initial value $X(0)$, because of the Markov property of $Y_W$. 

Fix $d\in {\bf N}\setminus \{ 1\}$ and $r>1$. Let $E_n$ be the set \( \{ x\in {\bf{R}}^d : |x|<r^n \}\) for $n\in {\bf Z}$, or \( \{ x\in {\bf{R}}^d : |x_i|<r^n,\ \mbox{for } i=1,2,\dots ,d \}\) for $n\in {\bf Z}$. We denote \( E_n \setminus E_{n-1}\) by $D_n$ for $n\in {\bf Z}$.

\begin{lm}\label{lm1-1}
If there exists a constant $c \in {\bf{R}}$ such that
\begin{equation}
\inf _{x\in D_n}W(x) \geq n(d-2)\log r -c \label{1-0}
\end{equation}
holds for infinitely many $n \in {\bf{N}}$, then $X_W$ is recurrent.  
\end{lm}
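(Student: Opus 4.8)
The plan is to apply the Dirichlet-form version of Ichihara's recurrence criterion (Theorem 1.6.7 in \cite{FOT}). Since the recurrent and transient property of $X_W$ coincides with that of $Y_W$, it suffices to show that the form ${\mathscr E}$ is recurrent, and by that criterion this reduces to exhibiting a sequence $u_k \in {\mathcal D}$ with $u_k \to 1$ almost everywhere and ${\mathscr E}(u_k,u_k) \to 0$. First I would enumerate the infinitely many integers satisfying (\ref{1-0}) as $n_1 < n_2 < \cdots$ and rewrite the hypothesis as $e^{-W(x)} \leq e^{c} r^{-n_j(d-2)}$ for $x \in D_{n_j}$; this is the only place where the environment enters the estimate.

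Next I would construct compactly supported cutoffs. For each $k$, let $u_k$ be the radial function (a function of $|x|$ in the ball case, or of $\max_i |x_i|$ in the cube case) that equals $1$ on $E_{n_1-1}$, decreases linearly by $1/k$ across each shell $D_{n_j}$ for $j=1,\dots,k$, stays constant on the regions between consecutive selected shells, and vanishes outside $E_{n_k}$. Each $u_k$ is Lipschitz with compact support, so $u_k \in {\mathcal D}$ because $W$ is locally bounded and hence $e^{-W}$ is locally bounded.

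The core of the argument is the energy estimate on a single selected shell. On $D_{n_j}$ one has $|\nabla u_k| = (1/k)\,\bigl[r^{n_j-1}(r-1)\bigr]^{-1}$, the weight is bounded by $e^{c} r^{-n_j(d-2)}$, and the volume of $D_{n_j}$ is of order $r^{n_j d}$. Multiplying these three quantities, the powers of $r$ cancel exactly — this cancellation is precisely the role played by the exponent $(d-2)\log r$ in (\ref{1-0}) — leaving a bound $C k^{-2}$ with $C$ independent of $j$ and $k$. Summing over the $k$ selected shells gives ${\mathscr E}(u_k,u_k) \leq C/k \to 0$.

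Finally I would check $u_k \to 1$ pointwise: for fixed $x$ the value is $u_k(x) = 1 - (j(x)-1)/k$, where $j(x)$ is determined by the fixed sequence $\{ n_j\}$ and does not depend on $k$, so $u_k(x) \to 1$ as $k\to\infty$. Both hypotheses of the criterion then hold, giving the recurrence of $Y_W$ and hence of $X_W$. I expect the main obstacle to be the design of the cutoff that achieves both requirements simultaneously: a single sharp drop from $1$ to $0$ across one shell has bounded, non-vanishing conductance, so the unit drop must be spread over the $k$ selected shells in order to force the energy to zero while still retaining pointwise convergence to $1$.
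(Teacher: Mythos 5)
Your proof is correct, but it takes a more self-contained route than the paper. The paper simply invokes the radial integral test of Theorem 1.6.7 in \cite{FOT} (recurrence follows once $\int_1^\infty \{ \int_{S^{d-1}} e^{-W(s\theta)}d\theta \}^{-1} s^{-d+1}ds = \infty$) and verifies divergence by bounding the integrand on each selected shell $D_n$, $n\in A$, so that each such shell contributes a fixed positive amount; you instead go back to the underlying characterization of recurrence by approximating sequences ($u_k\to 1$ a.e.\ with ${\mathscr E}(u_k,u_k)\to 0$, Theorem 1.6.3 rather than 1.6.7 of \cite{FOT}, which you conflate slightly in the citation) and build the radial cutoffs explicitly. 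The quantitative heart is identical in both arguments: the exact cancellation $r^{-2n}\cdot r^{-n(d-2)}\cdot r^{nd}=1$ coming from gradient, weight bound \eqref{1-0}, and shell volume is the same computation as the paper's integrand estimate $e^{n(d-2)\log r-c}(r^ns)^{-d+1}r^n$. What your version buys is that it reproves the integral test in the only case needed, treats the ball and cube geometries on an equal footing without the paper's homeomorphism remark, and handles $d=2$ uniformly (the paper must exclude $d=2$ from its displayed computation because of the factor $\frac{1}{-d+2}(r^{-d+2}-1)$ and defer it to a ``similar'' argument); what it costs is the explicit construction and the need to check $u_k\in{\mathcal D}$, which your appeal to local boundedness of $W$ and compact support of $u_k$ does settle.
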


\begin{proof}
Since \( \{ x\in {\bf{R}}^d : |x|<r^n \}\) and \( \{ x\in {\bf{R}}^d : |x_i|<r^n,\ \mbox{for } i=1,2,\dots ,d \}\) are homeomorphic, it is sufficient to consider only the case that $E_n$ is the set \( \{ x\in {\bf{R}}^d : |x|<r^n \}\).
As mentioned above, we check the recurrence of $Y_W$ instead of $X_W$.
By Theorem 1.6.7 in \cite{FOT}, it is sufficient to show that
\begin{equation}
\int _1^{\infty} \left\{ \int _{S^{d-1}} e^{-W(s\theta )} d\theta \right\} ^{-1} s^{-d+1}ds = \infty \label{1-1}
\end{equation}
almost surely where $S^{d-1}$ is the surface of the $d$-dimensional unit ball centered at the origin and $d\theta$ is the volume measure on $S^{d-1}$.
Let $A$ be the set of $n\in {\bf{N}}$ which satisfies (\ref{1-0}), $\pi _d$ the surface area of $S^{d-1}$. Let $d \not = 2$. Since the case that $d = 2$ can be discuss similarly, we omit the proof of the case.
By changing variables we have
\begin{eqnarray*}
\left( \mbox{the left hand side of (\ref{1-1})}\right)
&=& \sum _{n=0}^{\infty} \int _1^r \left\{ \int _{S^{d-1}} e^{-W(r^ns\theta )} d\theta \right\} ^{-1} (r^ns)^{-d+1}r^n ds \\
&\geq& \pi _d ^{-1} \sum _{n\in A} \int _1^r e^{n(d-2)\log r -c} (r^ns)^{-d+1}r^n ds\\
&=& \frac {\pi _d ^{-1} e^{-c}}{-d+2} \sum _{n\in A} \left( r^{-d+2}-1 \right) .
\end{eqnarray*}
The assumption implies the last term is infinite.
\end{proof}

Next, we randomize the environments.
Let $(\Omega, {\mathscr F}, P)$ be a probability space and $W$ a $C({\bf R}^d)$-valued Borel measurable random variable where the topology of $C({\bf R}^d)$ is equipped with locally uniform convergence such that $W(0)=0$ almost surely.
Let ${\mathscr B}(C({\bf R}^d))$ be the total family of the Borel measurable sets in $C({\bf R}^d)$.
For each $W$ we define the Markov system $\{(X_W(t)), P_W^x, x\in {\bf{R}}^d \}$ given by the generator (\ref{generator}).

We assume that there exist $r>1$ and $\alpha >0$ such that the law of $TW$ equals to that of $W$ where $T$ is a mapping from Borel measurable functions on ${\bf{R}}^d$ to themselves defined by \( T f(x):= r^{-\alpha }f(rx)\).
Then, $T$ is a measure preserving transformation.
We call such a $W$ with a measure preserving transformation $T$ a semi-selfsimilar random environment.
Define $D_n$ as above, and let $T_n := T^n$ for $n\in {\bf Z}$.

\begin{Th}\label{Th1-2}
If $T$ is weakly mixing, i.e. 
\[
\lim _{n\rightarrow \infty} \frac 1n \sum _{k=0}^{n-1}\left| P(  W \in  (T_k A) \cap B)- P( W \in A)P( W \in B)\right| =0,\quad A,B \in {\mathscr B}(C({\bf R}^d)),
\]
and if there exists a positive constant $a$ such that
\[
P \left( \inf _{x\in D_1}W(x) \geq a \right) >0,
\]
then $X_W$ is recurrent for almost all environments $W$.
\end{Th}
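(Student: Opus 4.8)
The plan is to show that for almost every environment $W$ the geometric hypothesis of Lemma~\ref{lm1-1} is satisfied, and then to invoke that lemma. Write $B:=\{f\in C({\bf R}^d):\inf_{x\in D_1}f(x)\geq a\}$, so that the standing assumption reads $P(W\in B)>0$; this set is measurable because $W$ is continuous and $D_1$ has compact closure. The first step is a scaling identity. Since $T_m f(x)=r^{-m\alpha}f(r^m x)$ and the dilation $x\mapsto r^{n-1}x$ carries $D_1$ onto $D_n$, one obtains
\[
\inf_{x\in D_n}f(x)=r^{(n-1)\alpha}\inf_{x\in D_1}(T_{n-1}f)(x),
\]
so that the event $\{T_{n-1}W\in B\}$ coincides with $\{\inf_{x\in D_n}W(x)\geq r^{(n-1)\alpha}a\}$.

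Next I would exploit the dynamics of $T$. As $T$ is measure preserving and weakly mixing, it is in particular ergodic, so Birkhoff's ergodic theorem applied to $\mathbf 1_B$ gives, almost surely as $N\to\infty$,
\[
\frac1N\sum_{k=0}^{N-1}\mathbf 1_B(T_k W)\longrightarrow P(W\in B)>0 .
\]
Hence $\sum_{k\geq 0}\mathbf 1_B(T_k W)=\infty$ almost surely; equivalently, for almost every $W$ there are infinitely many $n\in{\bf N}$ with $T_{n-1}W\in B$. Only ergodicity is used here, so the weak mixing hypothesis is more than enough for this step.

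Finally I would combine the two. On each event $\{T_{n-1}W\in B\}$ the scaling identity yields $\inf_{x\in D_n}W(x)\geq r^{(n-1)\alpha}a$. Because $a>0$, $\alpha>0$ and $r>1$, the right-hand side grows exponentially in $n$ and therefore eventually dominates the linear threshold $n(d-2)\log r$; thus there is a deterministic $N_0$ such that $T_{n-1}W\in B$ and $n\geq N_0$ together force $\inf_{x\in D_n}W(x)\geq n(d-2)\log r$. Since infinitely many of the indices $n$ with $T_{n-1}W\in B$ exceed $N_0$, condition (\ref{1-0}) holds (with $c=0$) for infinitely many $n$, for almost every $W$. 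Lemma~\ref{lm1-1} then yields that $X_W$ is recurrent for almost all $W$.

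The conceptual heart of the argument is the observation that semi-selfsimilarity turns a merely positive lower bound $a$ on the single annulus $D_1$ into the exponentially large lower bound $r^{(n-1)\alpha}a$ on $D_n$, which trivially overwhelms Ichihara's linear growth threshold; the only genuine probabilistic input is then that the fixed positive-probability event $B$ recurs along the orbit of $T$, which ergodicity supplies. Accordingly, I expect the step requiring the most care to be the bookkeeping around the scaling identity and its measurability, together with verifying that the hypothesis ``the law of $TW$ equals that of $W$'' is exactly what is needed to apply the ergodic theorem to the law of $W$ on $C({\bf R}^d)$.
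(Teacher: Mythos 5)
Your proof is correct and follows essentially the same route as the paper: deduce ergodicity from weak mixing, conclude that the positive-probability event $\{\inf_{x\in D_1}f\geq a\}$ recurs infinitely often along the $T$-orbit of $W$, translate this via the scaling $T_{n-1}$ into the exponential lower bound $\inf_{x\in D_n}W(x)\geq r^{(n-1)\alpha}a$, and invoke Lemma~\ref{lm1-1}. The only cosmetic difference is that you obtain the infinitely-often statement from Birkhoff's ergodic theorem, while the paper cites the characterization of ergodicity by which almost every orbit enters any set of positive measure (Theorem 1.5 in Walters); the two are interchangeable here.
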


\begin{proof}
It is sufficient to show that $W$ satisfies (\ref{1-0}) almost surely.
Since $T$ is weakly mixing, so is ergodic (see Theorem 1.17 in \cite{Walters}). By Theorem 1.5 in \cite{Walters} we have
\[
P\left( \bigcup _{n=0} ^\infty \{ T_n W\in A\}\right) =1 \quad \mbox{for}\ A \in {\mathscr B}(C({\bf R}^d))\ \mbox{such that}\ P(W \in A)>0.
\]
Thus, by choosing $A$ as $\{ f \in C({\bf R}^d); \inf _{x\in D_1} f \geq a\}$, we have
\[
P\left( \bigcap _{N=0}^\infty \bigcup _{n=N} ^\infty \{ \inf _{x\in D_1}T_n W(x) \geq a\} \right) =1.
\]
Hence, $\{ \inf _{x\in D_1}T_n W(x) \geq a \}$ occurs infinitely many times almost surely.
This means that \( \{ \inf _{x\in D_n} W(x) \geq ar^{\alpha n} \}\) occurs infinitely many times almost surely.
Therefore, $W$ satisfies (\ref{1-0}) almost surely.
\end{proof}

Next we consider a more specific case: the case of selfsimilar random environments.
We assume that there exists $\alpha >0$ such that the law of $T_tW$ equals to that of $W$ for all $t\in {\bf{R}}$ where $T_t$ is a mapping from Borel measurable functions on ${\bf{R}}^d$ to themselves defined by \( T_t f(x):= 2^{-\alpha t}f(2^tx)\) for $t\in {\bf{R}}$.
Then $\{ T_t; t\in {\bf{R}}\}$ is a one-parameter family of measure preserving transformations.
We call such a $W$ with a one-parameter family of measure preserving transformations $\{ T_t; t\in {\bf{R}}\}$ a selfsimilar random environment.

\begin{Th}\label{Th1-3}
If $\{ T_t; t\in {\bf{R}}\}$ is weakly mixing, i.e.
\[
\lim _{n\rightarrow \infty} \frac 1t \int _0^t \left| P(  W \in  (T_t A) \cap B)- P( W \in A)P( W \in B)\right| =0,\quad A,B \in {\mathscr B}(C({\bf R}^d)),
\]
and if there exists a positive constant $a$ such that
\begin{equation}\label{eq:Th1-3-10}
P \left( \inf _{x\in S^{d-1}}W(x) \geq a \right) >0,
\end{equation}
then $X_W$ is recurrent for almost all environments $W$.
\end{Th}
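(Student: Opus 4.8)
The plan is to verify the integral criterion (\ref{1-1}) directly, rather than the annulus estimate (\ref{1-0}), because the hypothesis (\ref{eq:Th1-3-10}) controls $W$ only on the sphere $S^{d-1}$ and not on a full annulus $D_1$ as in Theorem \ref{Th1-2}. By Theorem 1.6.7 in \cite{FOT} (the criterion extracted in the proof of Lemma \ref{lm1-1}), it suffices to show that (\ref{1-1}) holds for almost every environment $W$. Since $\{T_t;\,t\in{\bf R}\}$ is weakly mixing it is ergodic (the flow analogue of Theorem 1.17 in \cite{Walters}), and I would exploit ergodicity through the continuous-time Birkhoff ergodic theorem rather than the ``infinitely often'' statement used in the discrete case.

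First I would set $A := \{ f \in C({\bf R}^d) : \inf_{x \in S^{d-1}} f(x) \geq a \}$, which is closed and, by (\ref{eq:Th1-3-10}), satisfies $p := P(W \in A) > 0$. Because $W$ is continuous, the map $t \mapsto T_t W$ is continuous into $C({\bf R}^d)$, so $t \mapsto {\bf 1}_A(T_t W)$ is measurable and the ergodic theorem applied to the measure-preserving flow $\{T_t\}$ gives
\[
\lim_{T \to \infty} \frac 1T \int_0^T {\bf 1}_A(T_t W)\, dt = p > 0 \quad \text{almost surely}.
\]
Writing $h(s) := \inf_{|y| = s} W(y)$, membership $T_t W \in A$ is exactly $h(2^t) \geq a\,2^{\alpha t}$. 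Hence the random set $S := \{ t \geq 0 : h(2^t) \geq a\,2^{\alpha t} \}$ has density $p$, so $|S \cap [T_0, \infty)| = \infty$ for every $T_0 \geq 0$, almost surely.

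Finally I would bound (\ref{1-1}) from below. Since $\int_{S^{d-1}} e^{-W(s\theta)}\, d\theta \leq \pi_d\, e^{-h(s)}$, the integrand is at least $\pi_d^{-1} e^{h(s)} s^{-d+1}$, and substituting $s = 2^t$ gives
\[
\int_1^\infty \Big\{ \int_{S^{d-1}} e^{-W(s\theta)}\, d\theta \Big\}^{-1} s^{-d+1}\, ds \geq \pi_d^{-1} (\log 2) \int_0^\infty e^{h(2^t)}\, 2^{(2-d)t}\, dt,
\]
and, restricting the last integral to $S$,
\[
\pi_d^{-1} (\log 2) \int_0^\infty e^{h(2^t)}\, 2^{(2-d)t}\, dt \geq \pi_d^{-1}(\log 2)\int_S e^{a\,2^{\alpha t}}\, 2^{(2-d)t}\, dt .
\]
The integrand $e^{a\,2^{\alpha t}} 2^{(2-d)t}$ tends to $+\infty$ as $t \to \infty$, since the double-exponential factor $e^{a 2^{\alpha t}}$ dominates $2^{(2-d)t}$ for every $d\geq 2$; choosing $T_0$ so that this integrand exceeds $1$ on $[T_0,\infty)$ and using $|S \cap [T_0, \infty)| = \infty$ forces the right-hand side to be infinite. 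Thus (\ref{1-1}) holds a.s. and $X_W$ is recurrent for almost all $W$.

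The main obstacle is the passage from ``$A$ has positive probability'' to ``the good scales have infinite Lebesgue measure'': in Theorem \ref{Th1-2} the annulus hypothesis let one read off (\ref{1-0}) from an event recurring infinitely often, whereas here a merely unbounded set of good scales could have measure zero and would \emph{not} force (\ref{1-1}) to diverge. This is precisely why I would invoke the continuous-time ergodic theorem, to obtain positive density and hence infinite measure of $S$, instead of the weaker recurrence statement used in the discrete semi-selfsimilar case.
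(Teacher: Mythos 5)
Your proposal is correct and follows essentially the same route as the paper's proof: both verify the integral criterion of Theorem 1.6.7 in \cite{FOT} directly, substitute $s=2^t$ to bring in the scaling flow $T_t$, and apply the continuous-time ergodic theorem (weak mixing $\Rightarrow$ ergodicity for flows) to conclude that the set of good scales has positive density, hence infinite Lebesgue measure, which forces the integral to diverge. The paper phrases this via $M(s):=\inf_{x\in S^{d-1}}(T_sW)(x)$ and the bound $2^{(2-d)s}e^{2^{\alpha s}M(s)}\geq {\bf I}_{(a,\infty)}(M(s))$ for $s$ past a finite threshold $t_0$, which is exactly your restriction to $S\cap[T_0,\infty)$.
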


\begin{proof}
By Theorem 1.6.7 in \cite{FOT}, it is sufficient to show that
\begin{equation}
\int _1^{\infty} \left\{ \int _{S^{d-1}} e^{-W(s\theta )} d\theta \right\} ^{-1} s^{-d+1}ds = \infty \label{Th1-3-1}
\end{equation}
almost surely.
Let $M(t):=  \inf _{x\in S^{d-1}}(T_tW)(x)$.
Then,
\begin{align}
\int _1^\infty \left\{ \int _{S^{d-1}}e^{-W(s\theta )} d\theta \right\} ^{-1} s^{-d+1} ds
&= (\log 2) \int _0^\infty 2^{(2-d)s} \left\{ \int _{S^{d-1}}e^{-2^{\alpha s}(T_s W)(\theta )} d\theta \right\} ^{-1} ds \nonumber \\
&\geq \pi _d^{-1}(\log 2) \int _0^\infty 2^{(2-d)s} \exp \{ 2^{\alpha s} M(s)\} ds \nonumber \\
&\geq \pi _d^{-1}(\log 2) \int _{t_0} ^\infty {\bf I}_{(a,\infty )} (M(s))ds \label{Th1-3-2}
\end{align}
where $t_0:= \inf \{ t>0;\ 2^{\alpha t} a+ (2-d)t(\log 2) >0 \}$.
It holds that $t_0 <\infty$, since $a>0$ and $\alpha >0$.
The fact that weakly mixing yields ergodicity is also true for one-parameter families of measure preserving transformations (see Section 8 of in \cite{Arnold} and the remark on pages 22-23 in \cite{Halmos}).
Hence, we have
\begin{align*}
\lim _{t\rightarrow \infty }\frac 1t \int _0^t {\bf I}_{(a,\infty )} (M(s))ds
&= E\left[ {\bf I}_{(a,\infty )} (M(0))\right] \\
&= P \left( \inf _{x\in S^{d-1}}W(x) \geq a \right) .
\end{align*}
This equality and (\ref{eq:Th1-3-10}) imply
\[
\int _0^\infty {\bf I}_{(a,\infty )} (M(s))ds = \infty .
\]
Therefore, (\ref{Th1-3-2}) yields (\ref{Th1-3-1}).
\end{proof}

\section{Recurrence of the Brownian motion in Gaussian environments}\label{Gauss}

When $W$ is a Gaussian field, the sufficient condition for the recurrence can be simplified.
Let $W$ be a Gaussian field on ${\bf{R}}^d$ i.e. $(W(x); x\in {\bf R}^d)$ is a family of random variables such that the ${\bf{ R}}^n$-valued random variable $(W(x_1), W(x_2),\dots , W(x_n))$ has an $n$-dimensional Gaussian distribution for all $n\in N$ and $x_1,x_2,\dots ,x_n \in {\bf R}^d$. We assume that $W$ is continuous on ${\bf{R}}^d$ almost surely, $W(0)=0$ almost surely, and that $E[W(x)]=0$ for $x\in {\bf{R}}^d$.
Let $K(x,y):=E[W(x)W(y)]$ for $x,y \in {\bf R}^d$.
Define $\{ D_n ; n \in {\bf Z}\}$ as in Section \ref{General}.

First, we prepare the following lemma.

\begin{lm}\label{lm2-1}
If there exists a positive constant $\varepsilon$ such that
\[
\inf _{x\in D_1}\int _{D_1} K(x,y)dy \geq \varepsilon ,
\]
then for all $a \in {\bf{R}}$
\[
P\left( \inf _{x\in D_1}W(x)\geq a \right) >0.
\]
\end{lm}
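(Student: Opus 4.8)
The plan is to realize the restriction of $W$ to the closure $\overline{D_1}$ as a centered Gaussian measure $\mu$ on the separable Banach space $B=C(\overline{D_1})$, and to exhibit a single shift $h$ in the associated Cameron-Martin space $H$ whose values on $D_1$ all exceed $a$. First I would note that $\overline{D_1}$ is compact and $W$ is almost surely continuous, so $W|_{\overline{D_1}}$ is a $B$-valued Gaussian random variable; by Fernique's theorem $\sup_{x\in\overline{D_1}}|W(x)|$ has finite second moment, whence dominated convergence makes $K$ continuous, and thus bounded, on $\overline{D_1}\times\overline{D_1}$, so that $h(x):=\int_{D_1}K(x,y)\,dy$ is a well-defined continuous function on $\overline{D_1}$.

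The key structural fact I would invoke is that the topological support of a centered Gaussian measure on a separable Banach space equals the closure of its Cameron-Martin space, so that every Cameron-Martin element has the property that each of its sup-norm neighborhoods carries positive $\mu$-mass (this is the Cameron-Martin shift applied with $h\in H$). It therefore suffices to produce an $h\in H$ with $\inf_{x\in\overline{D_1}}h(x)>a$. For this I would use the description of $H$ through the dual: for $g\in L^2(D_1)$ the functional $\ell_g(f):=\int_{D_1}f(y)g(y)\,dy$ lies in $B^*$, and its image under the canonical map $B^*\to H$ is the function $x\mapsto E[\ell_g(W)W(x)]=\int_{D_1}K(x,y)g(y)\,dy$, which is accordingly an element of $H$.

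Taking $g=c\,{\bf I}_{D_1}$ with a constant $c>0$ gives $h_c(x)=c\int_{D_1}K(x,y)\,dy\in H$, and the hypothesis $\inf_{x\in D_1}\int_{D_1}K(x,y)\,dy\geq\varepsilon$ yields $\inf_{x\in D_1}h_c(x)\geq c\varepsilon$; by continuity the same bound holds on $\overline{D_1}$. Choosing $c$ so large that $c\varepsilon>a$ and setting $\delta:=(c\varepsilon-a)/2>0$, the event $\{\sup_{x\in\overline{D_1}}|W(x)-h_c(x)|<\delta\}$ has positive probability, and on it $W(x)>h_c(x)-\delta\geq c\varepsilon-\delta>a$ for every $x\in\overline{D_1}$. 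Hence $P(\inf_{x\in D_1}W(x)\geq a)>0$.

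The step I expect to be the main obstacle is the rigorous identification used above: that $h_c=c\int_{D_1}K(\cdot,y)\,dy$ genuinely belongs to the Cameron-Martin space $H$ (with finite norm $\|h_c\|_H^2=c^2\iint_{D_1\times D_1}K(y,z)\,dy\,dz$), and that membership in $H$ places $h_c$ in the support of $\mu$. Making this precise is exactly the relation between the behaviour of the Gaussian field and its Cameron-Martin space alluded to in the introduction; the remaining points, namely the Fubini interchange giving $E[\ell_g(W)W(x)]=\int_{D_1}K(x,y)g(y)\,dy$ and the continuity of $K$, are routine once the abstract Wiener space framework is in place.
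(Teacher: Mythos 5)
Your proposal is correct and follows essentially the same route as the paper: both identify $x\mapsto\int_{D_1}K(x,y)\,dy$ as an element of the Cameron--Martin space of the restricted field (the paper writes it as $A1=\sqrt A(\sqrt A 1)$ with $H=\mathrm{Im}\,\sqrt A$, you use the canonical embedding of the dual, which is the same fact), scale it by a large constant to exceed $a$, and invoke the support theorem (the closure of $H$ in sup-norm is the topological support of the law) to conclude that the sup-norm ball around the scaled shift has positive probability.
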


\begin{proof}
It is sufficient to show the assertion for $a>0$. Define a random function $\tilde W$ on $D_1$ by $\tilde W(x) :=W(x)$ for $x\in D_1$, which is the restriction of $W$ on $D_1$.
Then, $\tilde W$ can be regarded as an $L^2(D_1,dx)$-valued random variable, where $dx$ is the $d$-dimensional Lebesgue measure. Define a linear operator $A$ from $L^2(D_1,dx)$ to $L^2(D_1,dx)$ by
\[
Af(x) := \int _{D_1} K(x,y)f(y)dy, \quad x\in {\bf R}^d.
\]
It is known that $A$ is bounded and non-negative definite, and it holds that
\[
\int e^{i\langle f,W\rangle}dP = e^{-\frac 12\langle Af,f\rangle}, \quad f \in L^2(D_1,dx).
\]
Hence, the image of the linear operator $\sqrt A$ is the Cameron-Martin space $H$ of the law of $\tilde W$.
We remark that \( H \subset C_b(D_1) \), because $\tilde W$ is continuous almost surely.
For $f \in H$ and $\delta >0$ we define
\[
B(f,\delta ):=\left\{ g\in C_b(D_1);\ \sup _{x\in D_1}|f(x)-g(x)|<\delta \right\} .
\]
Fix $f \in H$ and $\delta >0$.
Now we assume that \( P( \tilde W \in B(f,\delta ))=0\), and we will obtain a contradiction.
Since $P\circ \tilde W^{-1}$ is absolutely continuous with respect to that with shifts for the direction $h \in H$ (see e.g. Theorem 1.3 in \cite{Sh}), we have
\begin{equation}\label{eq:10}
P\left( \tilde W \in \bigcup _{g\in H} B(g,\delta ) \right) =0.
\end{equation}
On the other hand, in view of Theorem 3.6.1 in \cite{Bogachev}, the completion $\overline{H}^{\| \cdot \| _{C_b(D_1)}}$ of $H$ with respect to $\| \cdot \| _{C_b(D_1)}$ coincides with the topological support of $P\circ \tilde W^{-1}$, i.e.
\[
P\left( \tilde W\in \overline{H}^{\| \cdot \| _{C_b(D_1)}} \right) =1.
\]
This contradicts (\ref{eq:10}).
Thus, we have
\begin{equation}\label{lm2-1-1}
P( \tilde W \in B(f,\delta ))>0\quad \mbox{for all}\ f \in H\ \mbox{and}\ \delta >0.
\end{equation}
Since $1\in L^2(D_1,dx)$, we have \( A1 \in H\). By the assumption we have
\[
(A1)(x)=\int _{D_1} K(x,y)dy \geq \varepsilon , \quad x\in D_1.
\]
Hence, by (\ref{lm2-1-1}) we obtain
\[
P\left( \inf _{x\in D_1}W(x)\geq a \right) \geq P\left( W \in B\left( {\frac {a+1}\varepsilon }A1,1\right) \right) >0.
\]
\end{proof}

By a similar argument to the proof of Lemma \ref{lm2-1}, we have the following corollary.

\begin{co}\label{co2-1}
If there exists a positive constant $\varepsilon$ such that
\[
\inf _{x\in S^{d-1}}\int _{S^{d-1}} K(x,y)dy \geq \varepsilon ,
\]
then for all $a \in {\bf{R}}$
\[
P\left( \inf _{x\in S^{d-1}}W(x)\geq a \right) >0.
\]
\end{co}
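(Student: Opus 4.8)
The plan is to repeat the argument of Lemma \ref{lm2-1} almost verbatim, replacing the region $D_1$ equipped with the $d$-dimensional Lebesgue measure by the compact manifold $S^{d-1}$ equipped with its surface measure $d\theta$. Since $S^{d-1}$ is compact we have $C_b(S^{d-1})=C(S^{d-1})$, and the almost sure continuity of $W$ on ${\bf R}^d$ restricts to almost sure continuity on $S^{d-1}$, so all the function-space ingredients carry over. As in Lemma \ref{lm2-1}, it suffices to treat the case $a>0$, since $\{\inf_{x\in S^{d-1}}W(x)\geq a\}$ is decreasing in $a$.

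First I would let $\tilde W$ be the restriction of $W$ to $S^{d-1}$, viewed as an $L^2(S^{d-1},d\theta)$-valued Gaussian random variable, and define the operator $A$ on $L^2(S^{d-1},d\theta)$ by
\[
Af(x):=\int_{S^{d-1}}K(x,y)f(y)\,d\theta(y),\quad x\in S^{d-1}.
\]
Exactly as in Lemma \ref{lm2-1}, $A$ is bounded and non-negative definite, the characteristic functional of $\tilde W$ equals $e^{-\frac 12\langle Af,f\rangle}$, and the image of $\sqrt A$ is the Cameron-Martin space $H$ of the law of $\tilde W$, with $H\subset C(S^{d-1})$. I would then run the same contradiction argument: Cameron-Martin absolute continuity (Theorem 1.3 in \cite{Sh}) propagates $P(\tilde W\in B(f,\delta))=0$ for a single $f\in H$ to $P(\tilde W\in\bigcup_{g\in H}B(g,\delta))=0$, which contradicts the support theorem (Theorem 3.6.1 in \cite{Bogachev}) identifying the $\sup$-norm closure of $H$ with the full-measure topological support. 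Hence $P(\tilde W\in B(f,\delta))>0$ for every $f\in H$ and $\delta>0$. Finally, since $1\in L^2(S^{d-1},d\theta)$ we have $A1\in H$ with $(A1)(x)=\int_{S^{d-1}}K(x,y)\,d\theta(y)\geq\varepsilon$ by hypothesis, so taking $f=\frac{a+1}{\varepsilon}A1$ and $\delta=1$ gives $P(\inf_{x\in S^{d-1}}W(x)\geq a)\geq P(\tilde W\in B(\frac{a+1}{\varepsilon}A1,1))>0$.

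The only point requiring genuine care — and the reason this is not a literal copy of Lemma \ref{lm2-1} — is to confirm that the Gaussian machinery applies on the lower-dimensional surface $S^{d-1}$ rather than on a full-dimensional open set: that $\tilde W$ is genuinely $L^2(S^{d-1},d\theta)$-valued (which follows from continuity and compactness, giving $\sup_{x\in S^{d-1}}|W(x)|<\infty$ almost surely), that $A$ is a well-defined bounded non-negative operator whose square root has range exactly $H$, and that both the support theorem and the Cameron-Martin theorem hold for the Radon Gaussian measure $P\circ\tilde W^{-1}$ on the separable Banach space $C(S^{d-1})$. All of these hold for an arbitrary Gaussian random element of a separable Banach space, so I expect this to be bookkeeping rather than a real obstacle; the hypothesis $\int_{S^{d-1}}K(x,y)\,d\theta(y)\geq\varepsilon$ plays precisely the same role as in Lemma \ref{lm2-1}, namely producing a Cameron-Martin element bounded below by $\varepsilon$ on all of $S^{d-1}$.
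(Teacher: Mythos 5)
Your proposal is correct and matches the paper's intent exactly: the paper itself only states that the corollary follows ``by a similar argument to the proof of Lemma \ref{lm2-1}'', i.e.\ by replacing $D_1$ with $S^{d-1}$ and Lebesgue measure with the surface measure, which is precisely what you carry out. Your additional remarks checking that the Gaussian support and Cameron--Martin machinery still apply on the compact surface $S^{d-1}$ are sound and fill in the bookkeeping the paper leaves implicit.
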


Let $r>1$ and $\alpha >0$, and define a mapping $T$ from Borel measurable functions on ${\bf{R}}^d$ to themselves by \( T f(x):= r^{-\alpha }f(rx)\).
We define a transformation $T_n$ as in Section \ref{General}.
Now we consider the sufficient condition for $T_n$ to be strongly mixing with respect to the measures associated with semi-selfsimilar Gaussian environments.

\begin{lm}\label{lm2-2}
Suppose that the law of $T_n W$ equals to that of $W$ for all $n$ and that
\[
\lim _{n\rightarrow \infty} r^{-\alpha n}\sup _{x,y\in D_1} K(r^n x,y)=0,
\]
then the family of the transformations $\{ T_n\}$ is strongly mixing.
\end{lm}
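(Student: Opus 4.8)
The plan is to verify the strong mixing property
\[
\lim_{n\to\infty} P\bigl( W\in (T_n A)\cap B\bigr)=P(W\in A)\,P(W\in B),\qquad A,B\in{\mathscr B}(C({\bf R}^d)).
\]
Since this convergence needs only to be checked on a family generating ${\mathscr B}(C({\bf R}^d))$, and since the cylinder sets
\[
\{ f\in C({\bf R}^d);\ (f(x_1),\dots ,f(x_m))\in G\},\qquad x_1,\dots ,x_m\in{\bf R}^d,\ G\subset{\bf R}^m\ \text{Borel},
\]
form such an algebra, I would first reduce to the case that $A$ and $B$ are cylinder sets. Because $T_nf(x)=r^{-\alpha n}f(r^nx)$, the transform $T_nA$ of a cylinder set based at $x_1,\dots ,x_m$ is again a cylinder set, now based at $r^nx_1,\dots ,r^nx_m$. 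Thus the whole statement reduces to showing that the joint law of $\bigl((T_nW)(x_1),\dots ,(T_nW)(x_m),W(y_1),\dots ,W(y_l)\bigr)$ converges weakly, as $n\to\infty$, to the product of the laws of $\bigl(W(x_1),\dots ,W(x_m)\bigr)$ and $\bigl(W(y_1),\dots ,W(y_l)\bigr)$.

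These are centered Gaussian vectors, so I would control them through their covariance matrices. By the assumed invariance of the law of $T_nW$, the covariance of the $(T_nW)$-block equals $\bigl(K(x_i,x_j)\bigr)_{i,j}$, independent of $n$ and identical to the $W$-marginal, while the $W$-block is $\bigl(K(y_i,y_j)\bigr)_{i,j}$. Hence the only $n$-dependent entries are the cross covariances
\[
E\bigl[(T_nW)(x_i)\,W(y_j)\bigr]=r^{-\alpha n}K(r^nx_i,y_j),
\]
and, for Gaussian laws, weak convergence to the product is equivalent to $r^{-\alpha n}K(r^nx_i,y_j)\to 0$ for each fixed pair $(x_i,y_j)$.

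The main step is to promote the hypothesis, which only asserts this decay for $x,y\in D_1$, to arbitrary base points. Invariance of the law of $T_jW$ gives the scaling identity $K(r^ju,r^jv)=r^{2\alpha j}K(u,v)$ for all $u,v\in{\bf R}^d$ and $j\in{\bf Z}$. Writing a fixed nonzero pair as $x_i=r^p\hat x$, $y_j=r^q\hat y$ with $\hat x,\hat y\in D_1$ and $p,q\in{\bf Z}$, and factoring $r^q$ out of both arguments, one finds
\[
r^{-\alpha n}K(r^nx_i,y_j)=r^{\alpha(p+q)}\Bigl\{ r^{-\alpha(n+p-q)}K\bigl(r^{\,n+p-q}\hat x,\hat y\bigr)\Bigr\},
\]
and the bracket tends to $0$ as $n\to\infty$ by the hypothesis applied along the index $m=n+p-q\to\infty$, uniformly in $\hat x,\hat y\in D_1$; when $x_i=0$ or $y_j=0$ the cross term vanishes since $W(0)=0$ almost surely. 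Consequently every cross covariance converges to $0$, the joint covariance converges to $\mathrm{diag}\bigl((K(x_i,x_j)),(K(y_i,y_j))\bigr)$, the Gaussian laws converge weakly to the product, and mixing follows on cylinder sets and hence on all of ${\mathscr B}(C({\bf R}^d))$. I expect the scaling reduction to arbitrary base points to be the crux, together with the standard but slightly delicate passage from a generating algebra of cylinder sets to arbitrary Borel sets via $T_n$-invariance and approximation.
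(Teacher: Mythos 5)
Your proposal is correct and follows essentially the same route as the paper: the paper likewise reduces strong mixing to the asymptotic factorization of the finite-dimensional (cylinder) distributions, citing Walters for the reduction and It\^o's 1944 argument for the Gaussian step that vanishing cross-covariances force asymptotic independence. The one piece the paper leaves entirely implicit --- rescaling arbitrary base points into $D_1$ via the identity $K(r^ju,r^jv)=r^{2\alpha j}K(u,v)$ so that the hypothesis on $D_1\times D_1$ actually applies --- is exactly the step you carry out explicitly, and your computation of it is correct.
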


\begin{proof}
In view of Theorem 1.17 in \cite{Walters} it is sufficient to prove that for \( x_1,\dots ,x_N, y_1, \dots ,y_M \in {\bf{R}}^d\), \( f \in C_b({\bf{R}}^N)\), and \( g \in C_b({\bf{R}}^M)\),
\begin{eqnarray*}
&&\lim _{n\rightarrow \infty} \int f(W(x_1),\dots ,W(x_N))g(T^n W(y_1),\dots ,T^n W(y_M))dP\\
&&= \int f(W(x_1),\dots ,W(x_N))dP \int g(W(y_1),\dots ,W(y_M))dP .
\end{eqnarray*}
We can prove this equality in the same way as in \cite{Ito}.
\end{proof}

Let $\alpha >0$, and define a mapping $T_t$ from Borel measurable functions on ${\bf{R}}^d$ to themselves by \( T_t f(x):= 2^{-\alpha t }f(2^t x)\) for $t\in {\bf R}$.
Then, by a similar argument to the proof of Lemma \ref{lm2-2} we have the following corollary.

\begin{co}\label{co2-2}
Suppose that the law of $T_t W$ equals to that of $W$ for all $t$ and that
\[
\lim _{t\rightarrow \infty} 2^{-\alpha t}\sup _{x,y\in S^{d-1}} K(2^t x,y)=0,
\]
then the family of the transformations $\{ T_t\}$ is strongly mixing.
\end{co}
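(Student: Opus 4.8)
The plan is to imitate the proof of Lemma~\ref{lm2-2}, replacing the discrete family $\{T_n\}$ by the one-parameter family $\{T_t\}$ and the shell $D_1$ by the sphere $S^{d-1}$. By the one-parameter analogue of Theorem~1.17 in \cite{Walters}, strong mixing of $\{T_t\}$ for the Gaussian law $P\circ W^{-1}$ will follow once I show, for arbitrary $x_1,\dots,x_N,y_1,\dots,y_M\in{\bf R}^d$, $f\in C_b({\bf R}^N)$ and $g\in C_b({\bf R}^M)$, that
\[
\lim_{t\to\infty}\int f(W(x_1),\dots,W(x_N))\,g(T_tW(y_1),\dots,T_tW(y_M))\,dP
=\int f\,dP\cdot\int g\,dP,
\]
since such cylinder functions generate the underlying $\sigma$-algebra. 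Because $T_t$ preserves the law of $W$, each of the two centered Gaussian vectors $(W(x_i))_i$ and $(T_tW(y_j))_j$ has a marginal law independent of $t$, so the only $t$-dependence sits in the cross-covariance block $E[W(x_i)\,T_tW(y_j)]=2^{-\alpha t}K(x_i,2^ty_j)$.

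The crux, and the step I expect to be the main obstacle, is turning the hypothesis on $S^{d-1}$ into decay of these cross-covariances. Self-similarity forces $K$ to be homogeneous of degree $2\alpha$: taking covariances in $W\stackrel{d}{=}2^{-\alpha s}W(2^s\cdot)$ gives $K(2^su,2^sv)=2^{2\alpha s}K(u,v)$. Writing any nonzero point as $x_i=2^{a_i}\theta_i$ and $y_j=2^{b_j}\phi_j$ with $\theta_i,\phi_j\in S^{d-1}$ (coordinates at the origin are deterministic since $W(0)=0$ almost surely, and contribute nothing), this homogeneity yields, with $s:=t+b_j-a_i$,
\[
2^{-\alpha t}K(x_i,2^ty_j)=2^{\alpha(a_i+b_j)}\,2^{-\alpha s}K(2^s\phi_j,\theta_i).
\]
The prefactor $2^{\alpha(a_i+b_j)}$ is a fixed constant, while the remaining factor $2^{-\alpha s}K(2^s\phi_j,\theta_i)$ is exactly of the form controlled in the hypothesis, so it tends to $0$ as $s\to\infty$ (uniformly over $\phi_j,\theta_i\in S^{d-1}$, with care taken that the scaling identity pins down the genuine signed covariance and not merely an upper bound). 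Hence each cross-covariance tends to $0$ as $t\to\infty$. This reduction to the sphere is precisely what the stated condition is tailored to supply, and it is the only genuinely new point relative to Lemma~\ref{lm2-2}.

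Once the cross-covariance block vanishes, the $(N+M)$-dimensional Gaussian vector $\bigl(W(x_\bullet),T_tW(y_\bullet)\bigr)$ has a covariance matrix converging to block-diagonal form, so it converges in distribution to the law in which its two blocks are independent copies of $(W(x_\bullet))$ and of $(W(y_\bullet))$, the latter identification using $T_tW\stackrel{d}{=}W$. Since $f$ and $g$ are bounded and continuous, the expectation of the product then converges to the product of the expectations, which is the desired factorization. This Gaussian decoupling is the same one carried out in \cite{Ito} and invoked in Lemma~\ref{lm2-2}, so the argument closes as soon as the covariance estimate above is in hand.
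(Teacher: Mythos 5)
Your proposal is correct and takes essentially the same route as the paper: the paper proves Corollary~\ref{co2-2} only by asserting it is analogous to Lemma~\ref{lm2-2}, whose proof reduces strong mixing to the factorization of cylinder-function integrals and then invokes the Gaussian decoupling argument of \cite{Ito}, exactly as you do, with your homogeneity reduction of arbitrary points to the sphere supplying the detail the paper leaves unwritten. The one point deserving care, which you already flag, is that the hypothesis as literally stated controls $\sup K$ rather than $\sup |K|$, whereas the decoupling needs the signed cross-covariances to vanish (as is indeed what gets verified in the fractional Brownian field application in Section~\ref{sec:fBf}).
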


By combining the results in Section \ref{General} and the results above, we obtain the following criterion for the recurrence of the Brownian motion in semi-selfsimilar Gaussian environments. 

\begin{Th}\label{th-gauss}
Let $W$ be a Gaussian field on ${\bf{R}}^d$ such that $W$ is continuous on ${\bf{R}}^d$ almost surely, $W(0)=0$ almost surely, and that $E[W(x)]=0$ for $x\in {\bf{R}}^d$, and let $K(x,y):=E[W(x)W(y)]$ for $x,y \in {\bf R}^d$. 
Assume that
\begin{enumerate}
\item \label{th-gauss-1}there exists a positive constant $\varepsilon$ such that
\[
\inf _{x\in D_1}\int _{D_1} K(x,y)dy \geq \varepsilon ,
\]
\item \label{th-gauss-2}the law of $T_n W$ equals to that of $W$ for all $n \in {\bf Z}$ and that
\[
\lim _{n\rightarrow \infty} r^{-\alpha n}\sup _{x,y\in D_1} K(r^nx,y)=0.
\]
\end{enumerate}
Then, the diffusion process $X_W$ associated with the generator (\ref{generator}) is recurrent for almost all environments $W$.
\end{Th}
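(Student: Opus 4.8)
The plan is to recognize that this theorem is essentially an assembly of the three results that immediately precede it, so the proof should consist of verifying that each hypothesis feeds exactly one of those ingredients and then invoking Theorem \ref{Th1-2}. The target conclusion is the recurrence statement of Theorem \ref{Th1-2}, whose two hypotheses are (a) weak mixing of the transformation $T$, and (b) the existence of a positive constant $a$ with $P(\inf_{x\in D_1}W(x)\geq a)>0$. My strategy is to produce (b) from condition \ref{th-gauss-1} and (a) from condition \ref{th-gauss-2}, and then simply cite Theorem \ref{Th1-2}.

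First I would handle the positivity condition. Condition \ref{th-gauss-1} is precisely the hypothesis of Lemma \ref{lm2-1}, which tells us that $P(\inf_{x\in D_1}W(x)\geq a)>0$ holds for \emph{all} $a\in{\bf R}$; in particular it holds for some (indeed any) fixed $a>0$. This furnishes the positive constant required by hypothesis (b) of Theorem \ref{Th1-2}.

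Next I would address the mixing condition. The two parts of condition \ref{th-gauss-2} (the stationarity of the law of $T_nW$ for all $n$, together with the decay $r^{-\alpha n}\sup_{x,y\in D_1}K(r^nx,y)\to 0$) are exactly the hypotheses of Lemma \ref{lm2-2}, which yields that the family $\{T_n\}$ is strongly mixing. Since strong mixing implies weak mixing, the transformation $T$ is weakly mixing, giving hypothesis (a) of Theorem \ref{Th1-2}. With both hypotheses of Theorem \ref{Th1-2} now verified, its conclusion gives recurrence of $X_W$ for almost all environments $W$, which is what we want.

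The proof therefore carries essentially no analytic difficulty, since all the genuine work has been discharged in Lemma \ref{lm2-1} (the Cameron--Martin/support argument) and Lemma \ref{lm2-2} (the mixing computation). The only points that need care are bookkeeping: confirming that the semi-selfsimilar setup here (with $Tf(x)=r^{-\alpha}f(rx)$ and $T_n=T^n$) matches the framework of Theorem \ref{Th1-2} verbatim, and noting explicitly the elementary implication that strong mixing entails weak mixing so that Theorem \ref{Th1-2} applies. If anything is a potential snag, it is only ensuring that the quantifier in Lemma \ref{lm2-1} (``for all $a$'') is correctly specialized to the ``there exists $a>0$'' form demanded by Theorem \ref{Th1-2}, which is immediate.
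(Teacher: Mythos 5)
Your proposal is correct and follows essentially the same route as the paper: Lemma \ref{lm2-1} for the positivity of $P(\inf_{x\in D_1}W(x)\geq a)$, Lemma \ref{lm2-2} for strong (hence weak) mixing, and then the semi-selfsimilar recurrence criterion of Theorem \ref{Th1-2}. In fact your citation of Theorem \ref{Th1-2} is the intended one; the paper's proof writes Theorem \ref{Th1-3} there, which is evidently a typo given the remark immediately following the theorem.
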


\begin{proof}
In view of Lemma \ref{lm2-1}, the assumption \ref{th-gauss-1} implies that there exists a positive constant $a$ satisfying that
\[
P\left( \inf _{x\in D_1}W(x)\geq a \right) >0.
\]
On the other hand, in view of Lemma \ref{lm2-2}, the assumption \ref{th-gauss-2} implies that $T_n$ is a strong mixing transformation for $P\circ W^{-1}$.
Therefore, by Theorem \ref{Th1-3} we have the assertion.
\end{proof}

By applying Corollaries \ref{co2-1}, \ref{co2-2}, and Theorem \ref{Th1-3}, instead of Lemmas \ref{lm2-1}, \ref{lm2-2}, and Theorem \ref{Th1-2} in the proof of Theorem \ref{th-gauss}, we have the following theorem for selfsimilar Gaussian environments.

\begin{Th}\label{th-gauss2}
Let $W$ be a Gaussian field on ${\bf{R}}^d$ such that $W$ is continuous on ${\bf{R}}^d$ almost surely, $W(0)=0$ almost surely, and that $E[W(x)]=0$ for $x\in {\bf{R}}^d$, and let $K(x,y):=E[W(x)W(y)]$ for $x,y\in {\bf R}^d$. 
Assume that
\begin{enumerate}
\item \label{th-gauss2-1}there exists a positive constant $\varepsilon$ such that
\[
\inf _{x\in S^{d-1}}\int _{S^{d-1}} K(x,y)dy \geq \varepsilon ,
\]
\item \label{th-gauss2-2}the law of $T_tW$ equals to that of $W$ for all $t\in {\mathbf R}$ and that
\[
\lim _{t\rightarrow \infty} 2^{-\alpha t}\sup _{x,y\in S^{d-1}} K(2^t x,y)=0.
\]
\end{enumerate}
Then, the diffusion process $X_W$ associated with the generator (\ref{generator}) is recurrent for almost all environments $W$.
\end{Th}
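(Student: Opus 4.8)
The plan is to mirror the proof of Theorem~\ref{th-gauss}, replacing Lemmas~\ref{lm2-1} and~\ref{lm2-2} by their spherical counterparts, Corollaries~\ref{co2-1} and~\ref{co2-2}, and invoking the selfsimilar criterion Theorem~\ref{Th1-3} in place of its semi-selfsimilar analogue. Recall that Theorem~\ref{Th1-3} guarantees recurrence of $X_W$ as soon as two facts are known about the selfsimilar environment: the positivity bound (\ref{eq:Th1-3-10}), that is, $P(\inf_{x\in S^{d-1}}W(x)\geq a)>0$ for some $a>0$, and the weak mixing of the one-parameter family $\{T_t;\ t\in{\bf R}\}$ defined by $T_tf(x)=2^{-\alpha t}f(2^tx)$. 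Thus the whole task reduces to deducing these two hypotheses from assumptions (\ref{th-gauss2-1}) and (\ref{th-gauss2-2}).

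First I would extract the positivity bound from assumption (\ref{th-gauss2-1}). The condition $\inf_{x\in S^{d-1}}\int_{S^{d-1}}K(x,y)\,dy\geq\varepsilon$ is exactly the input of Corollary~\ref{co2-1}, which therefore gives $P(\inf_{x\in S^{d-1}}W(x)\geq a)>0$ for every $a\in{\bf R}$, and in particular for some positive $a$; this is precisely (\ref{eq:Th1-3-10}). The real content sits inside Corollary~\ref{co2-1} and mirrors Lemma~\ref{lm2-1}: one views the restriction of $W$ to $S^{d-1}$ as a Gaussian element of $L^2(S^{d-1})$, identifies its Cameron--Martin space $H$ with the range of the square root of the covariance operator, and combines quasi-invariance of the law under Cameron--Martin shifts with the support theorem to see that every Cameron--Martin ball has positive probability. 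Since the image of the constant function under the covariance operator is pointwise $\geq\varepsilon$ on $S^{d-1}$, shifting by a large enough multiple of it forces $W$ to be uniformly positive on $S^{d-1}$ with positive probability.

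Second I would establish the mixing hypothesis from assumption (\ref{th-gauss2-2}). The stationarity of the law of $T_tW$ together with the covariance decay $\lim_{t\to\infty}2^{-\alpha t}\sup_{x,y\in S^{d-1}}K(2^tx,y)=0$ are exactly the hypotheses of Corollary~\ref{co2-2}, so $\{T_t\}$ is strongly mixing. As strong mixing implies weak mixing, the weak-mixing hypothesis of Theorem~\ref{Th1-3} is met. Feeding both verified hypotheses into Theorem~\ref{Th1-3} then yields that $X_W$ is recurrent for almost all $W$, which is the assertion.

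The proof is therefore a short assembly, and the genuine work is buried in Corollaries~\ref{co2-1} and~\ref{co2-2}. The step I expect to require the most care is matching normalizations so the corollaries plug cleanly into Theorem~\ref{Th1-3}: one must check that the $T_t$ in (\ref{th-gauss2-2}) is literally the flow used in Theorem~\ref{Th1-3} (same base $2$ and scaling $2^{-\alpha t}$), that every hypothesis is phrased over $S^{d-1}$ rather than over a dyadic annulus, and that the strong mixing delivered by Corollary~\ref{co2-2} indeed supplies the ergodicity needed for the Birkhoff-type average $\frac1t\int_0^t{\bf I}_{(a,\infty)}(M(s))\,ds\to P(\inf_{x\in S^{d-1}}W(x)\geq a)$ that drives the divergence of the integral in (\ref{Th1-3-1}). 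None of these is a true obstacle, but they are the places where conflating the selfsimilar and semi-selfsimilar settings would introduce an error.
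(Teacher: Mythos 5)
Your proposal is correct and is exactly the paper's intended argument: the paper proves Theorem~\ref{th-gauss2} by the one-line remark that one substitutes Corollaries~\ref{co2-1} and~\ref{co2-2} and Theorem~\ref{Th1-3} for Lemmas~\ref{lm2-1}, \ref{lm2-2} and the semi-selfsimilar criterion in the proof of Theorem~\ref{th-gauss}. Your additional care about matching the flow $T_t$ and the domain $S^{d-1}$, and noting that strong mixing supplies the weak mixing (hence ergodicity) needed for the time average in Theorem~\ref{Th1-3}, is sound and consistent with the paper.
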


\section{The Brownian motion on fractional Brownian fields}\label{sec:fBf}

In this section, we consider the case that environments are fractional Brownian fields.
We can apply Theorem \ref{th-gauss2} to this case, and show the recurrence of the Brownian motion on fractional Brownian fields.

For given $H\in (0,1)$, let $W$ be a Gaussian random environment which satisfying that $W(0)=0$ almost surely, \( E[W(x)]=0\) for $x\in {\bf{R}}^d$, and the covariance between $W(x)$ and $W(y)$
\[
K(x,y):= \frac12 \left( |x|^{2H}+|y|^{2H}-|x-y|^{2H} \right) , \quad x,y \in {\bf{R}}^d.
\]
Note that the law of Gaussian random environments is determined by the mean and the covariance.
The random field $W$ is called a fractional Brownian field. When $H=1/2$, it is called L\'evy's Brownian motion (c.f. \cite{Tanaka}).
It is easy to see that the environment $W$ is a selfsimilar random environment with $\alpha =H$ in the sense in Section \ref{General}.
The parameter $H$ is called the Hurst parameter.

Now we show the following theorem as an application of Theorem \ref{th-gauss2}.

\begin{Th}\rm
Let $W$ be a fractional Brownian field with the Hurst parameter $H\in (0,1)$.
Then, the process $X_W$ given by the generator (\ref{generator}) is recurrent for almost all environments $W$.
\end{Th}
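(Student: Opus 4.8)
The plan is to verify that a fractional Brownian field $W$ satisfies the two hypotheses of Theorem \ref{th-gauss2}; the remaining requirements of that theorem are harmless, since $W(0)=0$ and $E[W(x)]=0$ hold by definition, while almost sure continuity follows from the Kolmogorov criterion applied to $E[|W(x)-W(y)|^2]=|x-y|^{2H}$. Self-similarity with $\alpha=H$ is immediate from $K(cx,cy)=c^{2H}K(x,y)$, as already noted in the text, so it only remains to check the integral positivity condition \ref{th-gauss2-1} and the covariance decay condition \ref{th-gauss2-2}.

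For condition \ref{th-gauss2-1} I would first restrict $K$ to the unit sphere: for $x,y\in S^{d-1}$ one has $|x|=|y|=1$, hence $K(x,y)=1-\tfrac12|x-y|^{2H}$. Since $K$ depends only on $|x|,|y|,|x-y|$ and the surface measure is rotation invariant, the quantity $\int_{S^{d-1}}K(x,y)\,dy$ is invariant under rotations of $x$ and therefore constant on $S^{d-1}$; call it $\varepsilon$. Thus it suffices to show $\varepsilon>0$, i.e. that the spherical average of $|x-y|^{2H}$ is strictly less than $2$. Here I expect the only real difficulty, because the naive bound $|x-y|^{2H}\le 2^{2H}$ only yields positivity for $H<\tfrac12$, and the estimate is genuinely tight as $H\uparrow 1$: the spherical average of $|x-y|^2=2-2\,x\cdot y$ equals exactly $2$. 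The clean way around this is a convexity argument: the function
\[
\psi(H):=\pi_d^{-1}\int_{S^{d-1}}|x-y|^{2H}\,dy
\]
is convex in $H$, since each integrand $|x-y|^{2H}$ is convex in $H$, and it satisfies $\psi(0)=1$ and $\psi(1)=2$; hence $\psi(H)\le 1+H<2$ for $H\in(0,1)$. This gives $\varepsilon=\pi_d\bigl(1-\tfrac12\psi(H)\bigr)>0$, which is condition \ref{th-gauss2-1}; via Corollary \ref{co2-1} it furnishes the positivity $P(\inf_{x\in S^{d-1}}W(x)\ge a)>0$ for every $a$ used inside Theorem \ref{th-gauss2}.

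For condition \ref{th-gauss2-2} I would estimate $K(2^tx,y)$ for $x,y\in S^{d-1}$ and large $t$. Setting $s=2^t$ and $c=x\cdot y\in[-1,1]$ gives $|2^tx-y|^2=s^2-2sc+1$, so
\[
2^{-\alpha t}K(2^tx,y)=\tfrac12 s^{-H}\Bigl(s^{2H}+1-s^{2H}\bigl(1-2s^{-1}c+s^{-2}\bigr)^{H}\Bigr).
\]
Expanding $(1-2s^{-1}c+s^{-2})^{H}=1+O(s^{-1})$ uniformly in $c\in[-1,1]$ shows that the right-hand side equals $O(s^{H-1})+O(s^{-H})$, which tends to $0$ as $t\to\infty$ because $0<H<1$; the bound is uniform in $x,y$ since $c$ ranges over the compact set $[-1,1]$ and the remainder is bounded there. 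Hence $\sup_{x,y\in S^{d-1}}2^{-\alpha t}K(2^tx,y)\to 0$, so by Corollary \ref{co2-2} the family $\{T_t\}$ is strongly mixing, in particular weakly mixing. With both hypotheses verified, Theorem \ref{th-gauss2} yields the recurrence of $X_W$ for almost all environments $W$.
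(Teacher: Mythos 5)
Your proof is correct, and for hypothesis \ref{th-gauss2-1} it takes a genuinely different route from the paper. The paper also reduces to the sphere via rotation invariance, but then works with the one-dimensional angular integral $\int_{-\pi}^{\pi}K(e_1,v_\theta)\,d\theta$ and splits into cases: for $0<H\le\tfrac12$ the integrand $2-|e_1-v_\theta|^{2H}$ is pointwise nonnegative, while for $\tfrac12<H<1$ it changes sign at an angle $\theta_0$ and the paper pairs $\theta$ with $\pi-\theta$ to reduce positivity to the elementary inequality $(1-\cos\theta)^H+(1+\cos\theta)^H\le 2$. Your convexity-in-$H$ argument, using $\psi(0)=1$ and $\psi(1)=2$ (the latter because the spherical average of $x\cdot y$ vanishes) to get $\psi(H)\le 1+H<2$, avoids the case split entirely, works uniformly in $H\in(0,1)$, and even yields the explicit lower bound $\varepsilon\ge\pi_d(1-H)/2$; it is arguably cleaner, at the modest cost of invoking convexity of $H\mapsto e^{2H\log|x-y|}$ rather than purely elementary manipulations. (The strictness is fine: $\psi(H)\le 1+H$ is already strictly below $2$ for $H<1$, so you do not need strict convexity.) For hypothesis \ref{th-gauss2-2} your Taylor expansion of $(1-2s^{-1}c+s^{-2})^H$ uniformly in $c\in[-1,1]$ is essentially the same estimate the paper obtains via the mean value theorem applied to $f_{xy}(s)=|x-sy|^{2H}$, and both give the decay $O(2^{-Ht})+O(2^{(H-1)t})$. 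The remaining bookkeeping (continuity via Kolmogorov, selfsimilarity with $\alpha=H$, passage through Corollaries \ref{co2-1} and \ref{co2-2}) matches the paper.
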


\begin{proof}
It is sufficient to check the assumptions \ref{th-gauss2-1} and \ref{th-gauss2-2} of Theorem \ref{th-gauss2}.

First we check the assumption \ref{th-gauss2-1} of Theorem \ref{th-gauss2}.
When $d=1$, it is easy to see that the assumption \ref{th-gauss2-1} of Theorem \ref{th-gauss2} holds.
Let $d\geq 2$.
From the rotation invariance of $\int _{S^{d-1}} K(x,y) dy$ in $x$, we have
\begin{equation}
\inf _{x\in S^{d-1}}\int _{S^{d-1}} K(x,y)dy= c_d \int _{-\pi}^{\pi} K(e_1,v_\theta )d\theta , \label{ex2-1-1}
\end{equation}
where $e_1=(1,0,0, \dots ,0) \in {\bf{R}}^d$, $v_\theta =(\cos \theta ,\sin \theta ,0, \dots ,0) \in {\bf{R}}^d$, and $c_d=1$ for $d=2$ and $c_d$ is the surface area of $S^{d-2}$ for $d\geq 3$. When \( 0<H\leq{\frac 12}\),
\[
K(e_1 ,v_\theta )= \frac 12 \left( 2-|e_1-v_\theta |^{2H} \right) \geq 0, \quad \theta \in (-\pi ,\pi ].
\]
Hence, the assumption \ref{th-gauss2-1} of Theorem \ref{th-gauss2} holds.
We consider the case that \( {\frac 12}<H<1\).
Define \( \theta _0 \in [0,\pi ]\) by $\cos \theta _0 = 1- 2^{{\frac 1H} -1}.$
Then, \( {\frac \pi 2} <\theta _0 < \pi \), $2-|e_1 -v_\theta |^{2H} >0$ for $\theta \in [0, \theta _0)$ and $2-|e_1 -v_\theta |^{2H} <0$ for $\theta \in (\theta _0 ,\pi ]$.
Hence,
\begin{eqnarray*}
&& \int _{-\pi}^{\pi} K(e_1,v_\theta )d\theta\\
&&= \int _0^\pi \left\{ 2-|e_1 -v_\theta |^{2H} \right\} d\theta \\
&&= \int _0^{\pi -\theta _0} \left\{ 2 - (2-2\cos \theta )^H\right\} d\theta + \int _{\pi -\theta _0} ^{\theta _0} \left( 2-|e_1 -v_\theta |^{2H} \right) d\theta + \int _{\theta _0}^{\pi } \left\{ 2 - (2-2\cos \theta )^H\right\} d\theta \\
&&= 2^{H}\int _0^{\pi -\theta _0} \left\{ 2^{1-H} - (1-\cos \theta )^H\right\} d\theta + \int _{\pi -\theta _0}^{\theta _0} \left( 2-|e_1 -v_\theta |^{2H} \right) d\theta \\
&&\hspace{9cm} + 2^{H}\int _{\theta _0}^\pi \left\{ 2^{1-H} - (1-\cos \theta )^H\right\} d\theta\\
&&= 2^{H}\int _0^{\pi -\theta _0} \left\{ 2^{2-H} - (1-\cos \theta )^H - (1+\cos \theta )^H\right\} d\theta + \int _{\pi -\theta _0}^{\theta _0} \left( 2-|e_1 -v_\theta |^{2H} \right) d\theta .
\end{eqnarray*}
Since \( (1-\cos \theta )^H + (1+\cos \theta )^H \leq 2\) for \( \theta \in [0,\pi ]\), we have
\[
\int _{-\pi}^{\pi} K(e_1,v_\theta )d\theta \geq \int _{\pi -\theta _0}^{\theta _0} \left( 2-|e_1 -v_\theta |^{2H} \right) d\theta > 0.
\]
Thus, from (\ref{ex2-1-1}) we see that the assumption \ref{th-gauss2-1} of Theorem \ref{th-gauss2} holds.

Next, we check the assumption \ref{th-gauss2-2} of Theorem \ref{th-gauss2}.
Let $x,y\in S^{d-1}$ and $t \in {\mathbf R}$.
Then, we have
\begin{eqnarray*}
2^{-Ht} K(2^t x,y)
&=& 2^{-Ht-1} \left( |2^t x|^{2H}+|y|^{2H}-|2^t x-y|^{2H} \right) \\
&=& 2^{Ht-1} \left( 1+2^{-2Ht}-|x-2^{-t}y|^{2H} \right).
\end{eqnarray*}
Let \( f_{xy}(s)=|x-sy|^{2H}\) for \( s\in [0,1]\).
In view of the mean-value theorem, we have that there exists \( \theta _t \in [0,2^{-t}]\) such that 
\[
\partial _s f_{xy}(\theta _t ) = 2^t \left( 1-|x-2^{-t}y|^{2H} \right) .
\]
On the other hand, it is easy to see that
\[
\limsup _{t\rightarrow \infty} \sup _{x,y\in S^{d-1}} |\partial _s f_{xy}(\theta _t )|<\infty .
\]
Hence,
\begin{eqnarray*}
\limsup _{t\rightarrow \infty} 2^{-Ht}\sup _{x,y\in S^{d-1}} \left| K(2^t x,y) \right|
&=& \limsup _{t\rightarrow \infty} 2^{Ht-1} \sup _{x,y\in S^{d-1}} \left| 2^{-2Ht} - 2^{-t} (\partial _s f_{xy})(\theta _t) \right| \\
&\leq& \limsup _{t\rightarrow \infty} \left( 2^{-Ht-1} + 2^{(H-1)t-1}\sup _{x,y\in S^{d-1}} \left| \partial _s f_{xy}(\theta _t ) \right| \right) \\
&=& 0,
\end{eqnarray*}
which implies that the assumption \ref{th-gauss2-2} of Theorem \ref{th-gauss2} holds.
\end{proof}

\section*{Acknowledgment}
This work is supported by JSPS KAKENHI Grant number 25800054 and 26800063.

\end{document}